\newcommand{\C}{\mathbb{C}}\newcommand{\id}{\mathbbm{1}}\newcommand{\N}{\mathbb{N}}
\newcommand{\Z}{\mathbb{Z}}
\newcommand{\Lf}{\mathfrak{f}}
\newcommand{\Lg}{\mathfrak{g}}
\newcommand{\Lm}{\mathfrak{m}}
\newcommand{\Lr}{\mathfrak{r}}\newcommand{\Ls}{\mathfrak{s}}
\newcommand{\CA}{\mathcal{A}}
\newcommand{\rank}{\operatorname{rank}}
\newcommand{\fix}{\operatorname{fix}}
\newcommand{\Fix}{\operatorname{Fix}}
\newcommand{\ord}{\operatorname{ord}}
\newcommand{\eig}{\operatorname{eig}}
\newcommand{\dl}{\operatorname{dl}}
\newcommand{\map}[3]{ #1 : #2 \longrightarrow #3 }
\newcommand{\mapl}[5]{ #1 : #2 \longrightarrow #3 : #4 \longmapsto #5 }
\newtheorem{theorem}{Theorem}
\newtheorem{definition}{Definition}
\newtheorem{proposition}{Proposition}
\newtheorem{corollary}{Corollary}
\newtheorem{lemma}{Lemma}
\title{A note on the structure of graded Lie algebras}
\author{Wolfgang Alexander Moens}
\begin{document}

\maketitle

\abstract{Consider a finite-dimensional, complex Lie algebra $\Lg$ and a semisimple automorphism $\alpha$. This note aims to give a short and simple proof for explicit upper bounds for the derived length of the radical $\Lr$ and the rank of a Levi complement $\Lg / \Lr$ in terms of the number of eigenvalues of $\alpha$ and the dimension of the space of fixed-points. This is an extension of classical theorems by Kreknin, Shalev and Jacobson.}

\linespread{1.2}

\section{Introduction}

The literature provides many results on the structure of Lie algebras admitting transformations that are regular in some sense, e.g.: \cite{BorelMostow}. One of the best known results is due to Kreknin and has since been refined in many ways, \cite{Kreknin}. The theorem states that there exists a function $\map{k}{\N}{\N}$ such that every Lie ring with fixed-point free automorphism of finite order $n$ is solvable of derived length at most $k(n)$. More generally, Shalev has shown there is a function $\map{K}{\N}{\N}$ such that every Lie ring that admits a fixed-point free automorphism with $n$ distinct eigenvalues is solvable of derived length at most $K(n)$, \cite{Shalev}. Let us assume $k(n)$ and $K(n)$ assume the minimal values for which the theorems hold. We then have $$k(n) \leq K(n-1) \leq 2^{n-1}.$$ If the automorphism has non-trivial fixed-points however, the Lie ring need not be solvable at all. Indeed: even simple Lie algebras admit automorphisms that are not regular. The literature also provides many results on the structure of Lie algebras that admit automorphisms whose fixed-point subspace is small in some sense. One of the best known results is due to Khukhro and Makarenko, \cite{KhukhroMakarenko}: 

\begin{theorem} If a Lie algebra $\Lg$ admits an automorphism $\alpha$ of finite order $n$ with fixed-point subalgebra of finite dimension $m$, then $\Lg$ has a soluble automorphically-invariant
ideal of derived length bounded above in terms of $n$ and of finite codimension bounded above in
terms of $m$ and $n$. \end{theorem}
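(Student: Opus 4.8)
The plan is to reduce, by standard structure theory, to a $\Z/n$-graded Lie algebra with small degree-$0$ component, and then to run the method of graded centralizers, bounding everything against $\dim\Fix(\alpha)=m$. Since $\langle\alpha\rangle$ is finite, one may assume $\mathrm{char}\,k\nmid n$, so that $\alpha$ is semisimple; extending scalars to an algebraically closed field containing a primitive $n$-th root of unity $\zeta$ changes neither $m$ nor the truth of the statement, the descent back to $k$ being routine and suppressed. Then $\Lg=\bigoplus_{i\in\Z/n}\Lg_i$ with $\Lg_i=\{x:\alpha(x)=\zeta^i x\}$ is a $\Z/n$-grading, $\Lg_0=\Fix(\alpha)$ has dimension $m$, and ``$\alpha$-invariant'' means ``graded''. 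If $\dim\Lg$ is itself bounded in terms of $m$ and $n$ there is nothing to prove, so one may assume $\Lg$ is large.

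When $\dim\Lg<\infty$ one first peels off the semisimple part. Because $|\langle\alpha\rangle|$ is invertible in $k$, the conjugacy of Levi subalgebras can be made $\langle\alpha\rangle$-equivariant, so $\Lg=\Lr\rtimes\Ls$ for a graded Levi subalgebra $\Ls$ with graded radical $\Lr$. Here $\alpha|_\Ls$ is a semisimple automorphism of the semisimple Lie algebra $\Ls$ whose fixed subalgebra $\Ls\cap\Lg_0$ is reductive; from the structure of such automorphisms one reads off that $\rank\Ls$, and hence $\dim\Ls$, is bounded in terms of $\dim(\Ls\cap\Lg_0)\le m$. It therefore suffices to produce a graded ideal $H$ of $\Lg$ inside $\Lr$ with $\dim(\Lr/H)$ bounded in terms of $m,n$ and $\dl(H)$ bounded in terms of $n$. (For the general, possibly infinite-dimensional, statement no Levi decomposition is available; this is harmless, since the construction below never uses solvability and already yields a soluble ideal whose quotient is finite-dimensional and boundedly small, automatically sitting inside the radical whenever one exists.)

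So we are reduced to: given a $\Z/n$-graded Lie algebra $L$ with $\dim L_0=m$, produce a graded ideal $H$ with $\dl(H)\le f(n)$ and $\dim(L/H)\le g(m,n)$. The plan is the method of graded centralizers of Khukhro and Makarenko. Ranging over ``degree patterns'' $(i_1,\dots,i_t)\in(\Z/n)^t$ with $t$ bounded in terms of $n$ and $\sum_j i_j=0$, one selects, by a downward induction that removes one homogeneous ``obstruction'' at a time, a list of homogeneous elements of $L$; the point of the bookkeeping is that each freshly chosen element strictly decreases a suitable centralizer inside $L_0$, which can happen at most $m$ times per pattern, so the total number of selected elements is bounded in terms of $m$ and $n$. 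Taking $H$ to be the intersection of $L$ with the kernels of the ad-operators of the selected elements, one verifies: (i) $H$ is cut out by boundedly many homogeneous linear conditions, so $\dim(L/H)\le g(m,n)$; and (ii) the multilinear bracket expressions in elements of $H$ that Kreknin's combinatorial theorem derives from the hypothesis ``$L_0=0$'' already vanish on $H$, since a nonzero such expression would exhibit an obstruction the construction had deleted. By (ii) together with the graded reformulation of the theorems of Kreknin and Shalev recalled above, $H$ is soluble with $\dl(H)\le k(n)\le K(n-1)\le 2^{n-1}$; and one checks, or arranges by closing the selected family under the action of $L$, that $H$ is an ideal. Transporting the conclusion back to $k$ finishes the proof.

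\textbf{Main obstacle.} The reductions are routine; everything hard lies in the last step. Organizing the simultaneous induction over all degree patterns so that the centralizer chains genuinely have length at most $m$, and checking that the resulting $H$ satisfies enough Kreknin-type identities to be soluble of $n$-bounded derived length, is the delicate combinatorial core, and I do not expect a genuine shortcut around it.
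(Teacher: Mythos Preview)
This theorem is not proved in the paper at all; it is quoted from Khukhro and Makarenko as background. The paper's own contribution is the distinct Theorem~2 (and Theorem~3), which bounds $\dl(\Lr)$ for the radical $\Lr$ itself---with a bound depending on both $m$ and $n$---together with a rank/dimension bound on the Levi complement. The paper explicitly remarks that its argument ``requires much less effort'' than the Khukhro--Makarenko proof, at the price of an $m$-dependent derived-length bound.

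Your sketch is a faithful outline of the actual Khukhro--Makarenko method of graded centralizers, and your assessment that the combinatorial bookkeeping is the genuine obstacle is accurate. By contrast, the paper's proof of its own Theorem~2 is far more elementary and takes a completely different route: for the solvable part (Proposition~2) it runs a direct induction on $m=\fix(\alpha)$, peeling off a single fixed point from an $\alpha$-stable subnormal series at each step and invoking Shalev's bound $K(n)$ for the base case $m=0$; for the semisimple part (Proposition~1) it uses Jacobson's inequality $\rank(\Ls_0)\le 2\cdot\fix(\alpha)$ for simple $\Ls_0$ and a short reduction to the case where $\alpha$ permutes the simple ideals cyclically. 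No graded-centralizer machinery appears anywhere. What your approach buys is an $n$-only bound on the derived length; what the paper's approach buys is brevity and explicit constants.

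One small imprecision in your sketch: the rank of the Levi part $\Ls$ is not bounded by $\dim(\Ls\cap\Lg_0)$ alone; the order $n$ must enter. Take $\Ls=\Ls_0^{\oplus n}$ with $\alpha$ cyclically permuting the summands: then $\fix(\alpha)$ is the diagonal copy of $\Ls_0$, of fixed dimension, while $\rank(\Ls)=n\cdot\rank(\Ls_0)$ grows with $n$. The paper's Proposition~1 gives the correct form $\rank(\Ls)\le 2\cdot\eig(\alpha)\cdot\fix(\alpha)$.
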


In view of these results, we aim to prove the following, (cf. theorem $3$).

\begin{theorem} Let $\Lg$ be a finite-dimensional Lie algebra over the complex numbers. Let $\Lr$ be its solvable radical and let $\alpha$ be a semisimple automorphism of $\Lg$ of with $n$ eigenvalues and $m$-dimensional fixed-point subalgebra. Then the rank of any Levi-complement is bounded by $$\rank(\Lg / \Lr) \leq 2 \cdot m \cdot n$$ and the derived length of the radical is bounded as follows:
 \begin{eqnarray*}
  \dl(\Lr) &\leq& (m + 1) \cdot K(n) + m.
 \end{eqnarray*} 
 \end{theorem}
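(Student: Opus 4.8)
\section*{Proof strategy}

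The plan is to work throughout with the eigenspace decomposition of $\alpha$. Write $\Lg=\bigoplus_\lambda\Lg_\lambda$ for the decomposition over the $n$ eigenvalues; since $\alpha$ is an automorphism, $[\Lg_\lambda,\Lg_\mu]\subseteq\Lg_{\lambda\mu}$, so this makes $\Lg$ a graded Lie algebra with $n$ nonzero homogeneous components, the identity component $\Lg_1=\Lg^\alpha$ having dimension $m$. The radical $\Lr$ is characteristic, hence $\alpha$-invariant, hence graded: $\Lr=\bigoplus_\lambda\Lr_\lambda$ with $\Lr^\alpha=\Lr_1=\Lr\cap\Lg_1$. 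For the Levi bound I would first invoke Mostow's theorem — applicable because the Zariski closure of $\langle\alpha\rangle$ in $\mathrm{Aut}(\Lg)$ is a diagonalisable, hence linearly reductive, group of automorphisms (in the spirit of \cite{BorelMostow}) — to pick an $\alpha$-invariant Levi complement $\Ls$. Then $\alpha|_\Ls$ is a semisimple automorphism of the semisimple algebra $\Ls$ with at most $n$ eigenvalues and $\dim\Ls^\alpha\le m$, and the two assertions reduce to: \textbf{(A)} such an $\Ls$ has $\rank\Ls\le 2mn$; and \textbf{(B)} a solvable Lie algebra carrying such an automorphism has derived length at most $(m+1)\cdot K(n)+m$.

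For \textbf{(A)} I would first record, as a consequence of Shalev's theorem (the finiteness of $K(n)$), that a semisimple automorphism of a \emph{nonzero} semisimple Lie algebra has nonzero fixed points, since a fixed-point-free one would force solvability. Next, decompose $\Ls$ into simple ideals and group these into $\alpha$-orbits. An orbit of $k$ simple ideals is, as an $\alpha$-algebra, isomorphic to $\Lt^k$ with $\Lt$ simple and $\alpha$ acting by a cyclic shift twisted by factor-automorphisms; a direct computation shows that the $\alpha$-eigenvalues on the orbit are precisely the $k$-th roots of the $\phi$-eigenvalues, where $\phi$ is the $k$-th power of $\alpha$ restricted to one factor, whence $k\le n$; moreover the $\alpha$-fixed subalgebra of the orbit is isomorphic to $\Lt^\phi$ and its rank is $k\cdot\rank\Lt$. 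Thus \textbf{(A)} follows once one has the
\begin{quote}
\emph{Key Lemma.} For a simple complex Lie algebra $\Lt$ and a semisimple automorphism $\phi$, $\rank\Lt\le 2\dim\Lt^\phi$.
\end{quote}
For this I would use that a semisimple automorphism normalises a Cartan subalgebra $\Lh$ of $\Lt$ (again \cite{BorelMostow}); then $\phi|_\Lh$ has finite order $\ell$ (it acts through the finite group generated by the Weyl group and the diagram automorphisms), so $\phi^\ell$ fixes $\Lh$ pointwise and equals $\mathrm{Ad}(t)$ for $t$ in the maximal torus with Lie algebra $\Lh$, and $\Lt^\phi=\Lm^\phi$ for the reductive full-rank subalgebra $\Lm=Z_\Lt(t)$. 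A case analysis of the induced finite-order action on the root datum of $\Lm$ — using the ``nonzero fixed points'' fact to discard the degenerate possibilities — should then give $\rank\Lm\le 2\dim\Lm^\phi$, and the constant $2$ is best possible, as the generic outer semisimple automorphism of $\Lsl_{2k+1}$ shows.

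For \textbf{(B)} I would induct on $\dim\Lr^\alpha$ (the asserted bound is monotone in $m$, so this replaces $m$). The base case $\dim\Lr^\alpha=0$ is Shalev's theorem, $\dl\Lr\le K(n)$. For the step, two inputs are used. Applying Shalev to the fixed-point-free quotient $\Lr/\Lj$, where $\Lj$ is the $\alpha$-invariant ideal generated by $\Lr^\alpha$ (so the quotient has trivial identity component), gives $\Lr^{(K(n))}\subseteq\Lj$. Secondly, put $\Lr_+=\bigoplus_{\lambda\neq 1}\Lr_\lambda$ and let $W$ be the degree-$1$ component of the ideal generated by $\Lr_+$; then $W$ is an ideal of the solvable algebra $\Lr_1$. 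If $W\neq\Lr_1$, choose a codimension-one ideal $V$ of $\Lr_1$ with $W\subseteq V$; using $[\Lr,V]\subseteq V+\Lr_+$ one sees that the ideal $\Li$ generated by $V$ and $\Lr_+$ is graded with degree-$1$ component exactly $V$, so $\dim\Li^\alpha<\dim\Lr^\alpha$, while $\Lr/\Li$ is a quotient of the one-dimensional $\Lr_1/V$; the inductive hypothesis then closes the estimate with room to spare. The remaining case, where the ideal generated by the non-fixed part already contains all of $\Lr^\alpha$, is handled by combining it with the inclusion $\Lr^{(K(n))}\subseteq\Lj$; it is this case that costs an extra $K(n)$ per reduction and so produces the shape $(m+1)K(n)+m$ rather than $K(n)+m$.

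I expect the Key Lemma to be the real obstacle: the grading bookkeeping in \textbf{(A)} and \textbf{(B)} is routine, but pinning down the constant $2$ — equivalently, controlling how far a semisimple automorphism of a simple Lie algebra can depress the rank of its fixed subalgebra, including when it permutes the simple factors of the centraliser of a torus element — is the one step that requires genuine structure theory.
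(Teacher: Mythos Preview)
Your reduction to the semisimple and solvable cases, and your treatment of \textbf{(A)}, follow the paper's argument closely: both decompose $\Ls$ into $\alpha$-orbits of simple ideals, bound the orbit length by $\eig(\alpha)$ via an eigenvalue count, identify $\Fix(\alpha)$ on an orbit with the fixed points of the return map $\alpha_0=\alpha^k|_{\Ls_0}$ on one simple factor, and then invoke the Key Lemma $\rank(\Ls_0)\le 2\fix(\alpha_0)$. The paper simply cites Jacobson for this lemma rather than reproving it, so your sketch there is additional (and somewhat more delicate) work, but not a deviation in strategy.

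For \textbf{(B)} there is a genuine gap. Your case $W\neq\Lr_1$ is correct --- indeed $V\oplus\Lr_+$ is already an ideal, and you get the better estimate $\dl(\Lr)\le m\,K(n)+m$ there. But the case $W=\Lr_1$ is not actually handled. Passing to $\Lj$ does not reduce the fixed-point dimension, since $\Lj_1=\Lr_1$; and passing to $\Lr^{(K(n))}$ need not reduce it either, since one may well have $\Lr_1\subseteq\Lr^{(K(n))}$. Your sentence ``combining it with the inclusion $\Lr^{(K(n))}\subseteq\Lj$'' gestures at the right cost ($K(n)$ per step) but does not supply a mechanism for decreasing $m$.

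The paper sidesteps this dichotomy with a single cleaner move. Work with the derived series and choose $i$ so that $\Fix(\alpha)\subseteq\Lg^{(i-1)}$ but $\Fix(\alpha)\not\subseteq\Lg^{(i)}$; pick $x\in\Fix(\alpha)\setminus\Lg^{(i)}$. Then $\Lg/\Lg^{(i-1)}$ is fixed-point free, so $i-1\le K(n)$ by Shalev, and since $x\notin[\Lg^{(i-1)},\Lg^{(i-1)}]$ one can take an $\alpha$-stable hyperplane $\Lg'\subseteq\Lg^{(i-1)}$ containing $[\Lg^{(i-1)},\Lg^{(i-1)}]$ and avoiding $x$. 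This $\Lg'$ is an ideal of $\Lg^{(i-1)}$ with $\fix(\alpha|_{\Lg'})=m-1$, and the induction closes:
\[
\dl(\Lg)\le\dl(\Lg/\Lg^{(i-1)})+\dl(\Lg^{(i-1)}/\Lg')+\dl(\Lg')\le K(n)+1+\bigl(m\,K(n)+(m-1)\bigr).
\]
This descent down the derived series is precisely what would repair your case $W=\Lr_1$ (it forces you into your ``good'' case after at most $K(n)$ derived steps), but once you have it the case split becomes unnecessary.
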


We end this introduction with some remarks. Both theorem $1$ and $2$ (cf. theorem $3$) reduce to the classical case if the automorphism is regular. Secondly, note that the bound on the derived length by Khukhro and Makarenko is better than that of theorem $2$ in the sense that it does not depend on the dimension of the subspace of fixed points. On the other hand, we would like to point out three advantages of the proofs in this note. Firstly: the proof of theorem $2$ requires much less effort than that of theorem $1$. Secondly, the characteristic ideal is simply the solvable radical of the Lie algebra and its codimenson can therefore be interpreted as the dimension of a Levi complement. Finally, the explicit upper bounds of  corollary $1$ and theorem $3$ below are an easy byproduct of the proofs and smaller than the previous (recursive) upper bounds.

\begin{corollary} Let $\Lg$, $\Lr$, $n$ and $m$ be as above. Then $ \dl(\Lr) \leq (m +1) 2^{n} + m$ and $\dim(\Lg / \Lr) \leq 16  (m n)^3 + 4  (m n)^2 + 224 (m n).$ \end{corollary}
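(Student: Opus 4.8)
The plan is to read both estimates off Theorem~2 after inserting two elementary numerical facts. The bound on the derived length is immediate: the introduction records that $K(n-1) \le 2^{n-1}$, hence $K(n) \le 2^n$, and plugging this into the inequality $\dl(\Lr) \le (m+1)K(n)+m$ of Theorem~2 gives $\dl(\Lr) \le (m+1)2^n + m$. Everything else goes into converting the rank bound $\rank(\Lg/\Lr) \le 2mn$ of Theorem~2 into the stated dimension bound.

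For that, I would set $R := \rank(\Lg/\Lr) \le 2mn$ and decompose the semisimple Lie algebra $\Lg/\Lr = \Lg_1 \oplus \cdots \oplus \Lg_k$ into simple ideals. Each $\Lg_i$ has rank at least $1$ and at most $R$, and the ranks sum to $R$; in particular $k \le R$. So the matter reduces to the purely structural claim that a simple complex Lie algebra $\Ls$ of rank $r$ satisfies $\dim \Ls \le 2r^2 + r + 112$. Granting this, $\dim(\Lg/\Lr) = \sum_i \dim \Lg_i \le k(2R^2+R+112) \le R(2R^2+R+112)$; since $x \mapsto x(2x^2+x+112)$ is increasing and $R \le 2mn$, the right-hand side is at most $(2mn)\bigl(2(2mn)^2 + 2mn + 112\bigr) = 16(mn)^3 + 4(mn)^2 + 224(mn)$, which is exactly the bound in the corollary.

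The one step requiring actual checking is this structural claim, and it is a quick consequence of the classification of simple complex Lie algebras: the classical types $A_r$, $B_r$, $C_r$, $D_r$ all have dimension at most $2r^2+r$, and so does $E_6$ (dimension $78 = 2\cdot 6^2 + 6$), while the remaining exceptional types contribute dimensions $14$ ($G_2$), $52$ ($F_4$), $133$ ($E_7$), $248$ ($E_8$), the largest excess over $2r^2+r$ being $248 - (2\cdot 8^2 + 8) = 112$, attained by $E_8$. If one wished to avoid the classification, one could instead normalise an invariant form so that short roots have a fixed length; then every root has bounded length and lies in the rank-$r$ root lattice, so the number of roots — hence $\dim \Ls$ — is $O(r^3)$, which already has the right shape, though matching the precise constants would take a little more bookkeeping. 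I do not expect a genuine obstacle: once Theorem~2 is available, the corollary is essentially substitution plus this short finite verification.
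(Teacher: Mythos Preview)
Your proposal is correct and follows essentially the same approach as the paper: substitute Shalev's bound $K(n)\le 2^{n}$ into Theorem~2 for the derived-length estimate, and convert the rank bound $\rank(\Lg/\Lr)\le 2mn$ into a dimension bound via the classification, using that the number of simple ideals is at most the total rank and that each simple factor of rank $r$ has dimension at most $f(r)$ for a monotone $f$. The paper packages the classification step as Lemma~2 with $f(k)=2k^{2}+2k+112$; your sharper $f(r)=2r^{2}+r+112$ is also valid (the classical types and $E_6$ satisfy $\dim\le 2r^{2}+r$, and the excess $112$ is attained only at $E_8$), and in fact it is your choice of $f$ that reproduces the middle coefficient $4(mn)^{2}$ in the corollary exactly, whereas the paper's $f$ would give $8(mn)^{2}$.
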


In this note we will only consider finite-dimensional Lie algebras over the complex numbers.

\section{Semi-simple automorphisms of Lie algebras}

Let us introduce some notation. For an automorphism $\alpha$ of $\Lg$ we define $\Fix(\alpha)$ to be the space of fixed-points and we define $\fix(\alpha)$ to be the dimension of that space. Similarly, we let $\eig(\alpha)$ be the number of distinct eigenvalues of $\alpha$. If $\alpha$ is periodic, we let $\ord(\alpha)$ be its order, that is: the minimal $n \in \N$ for which $\alpha^n = \id_{\Lg}$. Note that periodic automorphisms are semisimple.

\subsection{Simple and semi-simple Lie algebras}

The following result quantifies the general observation that automorphisms of simple Lie algebras admit many fixed-points (theorems $6$ and $8$ of \cite{JacobsonFix}) and it is based on the classification of the complex, simple Lie algebras. We would like to point out to the reader that Jacobson's use of the term \emph{regular automorphism} in \cite{JacobsonFix} is not the one used in this paper's introduction (or in much of the literature).

\begin{lemma}[Jacobson] For every simple Lie algebra $\Ls$ and every semisimple automorphism $\alpha$ of $\Ls$, we have the inequality $$\rank(\Ls) \leq 2 \cdot \fix(\alpha).$$ \end{lemma}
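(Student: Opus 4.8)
The plan is to reduce the inequality, by way of two standard structural facts, to an elementary count on the Dynkin diagram of $\Ls$. The first reduction is to the case that $\alpha$ has finite order. Indeed, the Zariski closure $A$ of $\langle\alpha\rangle$ inside the algebraic group $\mathrm{Aut}(\Ls)$ is diagonalizable, so $\Ls$ is the direct sum of finitely many $A$-weight spaces and $\Fix(\alpha)=\Ls^{A}$ is the sum of those carrying the trivial weight. As the torsion elements are Zariski-dense in $A$, we may choose a torsion element $\beta\in A$ whose weight is nontrivial on each of the finitely many non-fixed weight spaces; then $\Fix(\beta)=\Fix(\alpha)$, while $\beta$ has finite order. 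Since neither side of the desired inequality changes, we assume henceforth that $\alpha$ has finite order.

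Next I would feed $\alpha$ through the classification of finite-order automorphisms of simple complex Lie algebras (Kac; see also Helgason). Let $\Delta$ be the Dynkin diagram of $\Ls$, let $\bar\alpha$ be the image of $\alpha$ in $\mathrm{Out}(\Ls)\cong\mathrm{Aut}(\Delta)$ --- an element of order $r\in\{1,2,3\}$ --- and let $\sigma_{0}$ be the pinned representative of $\bar\alpha$, also of order $r$. The classification says $\alpha$ is conjugate in $\mathrm{Aut}(\Ls)$ to $\mathrm{Ad}(\exp h)\circ\sigma_{0}$ for some $h$ in a Cartan subalgebra $\Lh_{0}$ of $\Lg_{0}:=\Fix(\sigma_{0})$. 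Conjugation changes neither $\fix(\alpha)$ nor $\rank(\Ls)$, so assume $\alpha=\mathrm{Ad}(\exp h)\circ\sigma_{0}$. I will use two facts about $\Lg_{0}$: it is again a simple Lie algebra, and its rank $\ell_{0}$ equals the number of $\langle\sigma_{0}\rangle$-orbits on the nodes of $\Delta$, since $\sigma_{0}$ permutes a basis of simple coroots of $\Ls$ according to that permutation.

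The rest is short. Because $\sigma_{0}$ restricts to the identity on $\Lg_{0}=\Fix(\sigma_{0})$, the automorphism $\alpha$ restricts to $\mathrm{Ad}(\exp h)$ on $\Lg_{0}$, so
\[
\Fix(\alpha)\cap\Lg_{0}=\{\,x\in\Lg_{0}:\mathrm{Ad}(\exp h)\,x=x\,\}\supseteq\Lh_{0},
\]
and hence $\fix(\alpha)\geq\dim\Lh_{0}=\ell_{0}$. It remains to see that $\rank(\Ls)=|\Delta|\leq 2\,\ell_{0}$. If $r\leq 2$ this is immediate, as every $\langle\sigma_{0}\rangle$-orbit on $\Delta$ then has at most two elements, so $\ell_{0}\geq|\Delta|/2$. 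If $r=3$ then $\Ls$ is of type $D_{4}$, the only simple Dynkin diagram with a symmetry of order three; there $\sigma_{0}$ fixes the central node and cycles the other three, so $\ell_{0}=2=|\Delta|/2$. In every case $\rank(\Ls)=|\Delta|\leq 2\,\ell_{0}\leq 2\,\fix(\alpha)$.

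The main obstacle is honesty about what is being assumed: the one nontrivial ingredient, the conjugation of $\alpha$ to the normal form $\mathrm{Ad}(\exp h)\circ\sigma_{0}$, is the classification theorem itself, so the argument is ``short'' only modulo that input (as is Jacobson's original, which also rests on the classification). The remaining points --- Zariski density of torsion, the identification of $\Fix(\alpha)\cap\Lg_{0}$, and the diagram count --- are routine; the one place to be careful is confirming that $\Lg_{0}$ is genuinely simple in each folding $A_{2n}\!\to\!B_{n}$, $A_{2n-1}\!\to\!C_{n}$, $D_{n}\!\to\!B_{n-1}$, $E_{6}\!\to\!F_{4}$, $D_{4}\!\to\!G_{2}$, which is also where the bound is seen to be sharp (equality for $A_{2n}$ and for triality on $D_{4}$).
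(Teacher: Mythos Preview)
Your argument is correct. The paper does not supply its own proof of this lemma; it simply quotes the statement from Jacobson's 1962 paper (Theorems~6 and~8 there) and remarks that it rests on the classification of complex simple Lie algebras. So there is no in-paper proof to compare against beyond that citation.

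That said, it is worth noting how your route relates to Jacobson's. Jacobson argues type by type: he shows that any semisimple automorphism can be moved so that it normalises a fixed Cartan subalgebra and Borel, writes it as an inner part times a diagram symmetry, and then checks in each classical and exceptional family how many Cartan directions survive. Your proof packages exactly this into Kac's normal form $\alpha\sim\mathrm{Ad}(\exp h)\circ\sigma_{0}$ with $h\in\Lh_{0}$, from which the key observation $\Lh_{0}\subseteq\Fix(\alpha)$ drops out in one line, and the residual work collapses to the orbit count $|\Delta|\le 2\ell_{0}$ on the Dynkin diagram (immediate for $r\le 2$, and the single case $D_{4}$ for $r=3$). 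The preliminary reduction to finite order via Zariski density of torsion in the diagonalizable closure of $\langle\alpha\rangle$ is a clean way to land in Kac's setting and is sound as stated. The net effect is the same dependence on the classification that the paper flags, but organised more uniformly; the case analysis is pushed entirely into the known folding table, and you correctly identify where the bound is sharp ($A_{2n}$ under the outer involution and $D_{4}$ under triality).
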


Jacobson confined his attention to simple Lie algebras, ``for the sake of simplicity.'' It turns out that the generalization to semisimple Lie algebras is not too difficult. Let us first consider Lie algebras of a special form. Let $\Ls$ be a semisimple Lie algebra that decomposes into $m \in \N_0$ simple ideals $\Ls_0 \oplus \Ls_1 \oplus \cdots \oplus \Ls_{m-1}$ and suppose it has an automorphism $\alpha \in \operatorname{Aut}(\Ls)$ satisfying $\alpha(\Ls_j) = \Ls_{j+1}$. Here, the subscripts are taken modulo $m$ so that $\alpha$ permutes the simple ideals cyclically. In particular: all simple ideals are isomorphic. \newline

Let $\{ \pi_i \}_{0 \leq i \leq \Z_m}$ and $\{ \iota_i \}_{0 \leq i \leq \Z_m}$ be the obvious projections and embeddings. Consider a non-zero eigenvector $v$ with non-zero eigenvalue $\lambda$. Then the homogeneous components $\{ \pi_j(v) \}_{j \in \Z_m}$ are linearly independent and span an $\alpha$-stable subspace $V$ of $\Ls$. Let $\alpha'$ be the restriction of $\alpha$ to $V$. Then $\alpha'$ has $m$ distinct eigenvalues $\{ \omega_m^i \cdot \lambda \}_{0 \leq i < m}$, for some primitive $m$'th root of unity $\omega_m$. In particular: $m \leq \eig(\alpha)$. Let $\alpha_0$ be the automorphism $\pi_0 \circ \alpha^m \circ \iota_0$ of $\Ls_0$. Then $\fix(\alpha) = \fix(\alpha_0)$ and we get
$$ \rank(\Ls) = m \cdot \rank(\Ls_0) \leq 2 \cdot \eig(\alpha) \cdot \fix(\alpha_0) = 2 \cdot \eig(\alpha) \cdot \fix(\alpha) .$$
We have just shown that the proposition below holds if the automorphism permutes the simple ideals of $\Ls$ cyclically. Let us now consider the general case. 

\begin{proposition} For every semisimple Lie algebra $\Ls$ and every semisimple automorphism $\alpha$ of $\Ls$ we have $$ \rank(\Ls) \leq 2 \cdot \eig(\alpha) \cdot \fix(\alpha) .$$ \end{proposition}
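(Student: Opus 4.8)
The plan is to reduce the general semisimple automorphism $\alpha$ of $\Ls$ to the already-treated case where $\alpha$ permutes the simple ideals cyclically. First I would fix the decomposition $\Ls = \Ls_0 \oplus \cdots \oplus \Ls_{k-1}$ of $\Ls$ into its minimal (simple) ideals. Since $\alpha$ is an automorphism, it permutes this finite set of simple ideals, so it induces a permutation $\sigma$ of $\{0,\dots,k-1\}$; decomposing $\sigma$ into disjoint cycles partitions the simple ideals into $\alpha$-orbits. For each orbit $O$, the sum $\Ls_O := \bigoplus_{j \in O} \Ls_j$ is an $\alpha$-stable ideal, and $\Ls = \bigoplus_{O} \Ls_O$ as an orthogonal direct sum of $\alpha$-stable semisimple ideals. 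Writing $\alpha_O$ for the restriction of $\alpha$ to $\Ls_O$, each $\alpha_O$ is a semisimple automorphism that permutes the simple ideals of $\Ls_O$ in a single cycle, so the case already established in the text applies to each $(\Ls_O, \alpha_O)$.

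Next I would assemble the bounds orbit by orbit. Rank is additive over the direct summands, $\rank(\Ls) = \sum_O \rank(\Ls_O)$, and by the cyclic case we have $\rank(\Ls_O) \le 2 \cdot \eig(\alpha_O) \cdot \fix(\alpha_O)$ for each orbit $O$. The fixed-point space decomposes as $\Fix(\alpha) = \bigoplus_O \Fix(\alpha_O)$, so $\fix(\alpha) = \sum_O \fix(\alpha_O)$; in particular $\fix(\alpha_O) \le \fix(\alpha)$ for every orbit. For the eigenvalue count, note that the eigenvalues of $\alpha$ on $\Ls$ are precisely the union over all orbits of the eigenvalues of $\alpha_O$ on $\Ls_O$, hence $\eig(\alpha_O) \le \eig(\alpha)$ for each $O$; alternatively, since $\Ls_O$ contains a nonzero eigenvector for each of its eigenvalues and these sit inside $\Ls$, this is immediate.

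Finally I would combine these. Using $\eig(\alpha_O) \le \eig(\alpha)$ and then $\sum_O \fix(\alpha_O) = \fix(\alpha)$:
\begin{eqnarray*}
 \rank(\Ls) \;=\; \sum_O \rank(\Ls_O) \;\leq\; \sum_O 2 \cdot \eig(\alpha_O) \cdot \fix(\alpha_O) \;\leq\; 2 \cdot \eig(\alpha) \cdot \sum_O \fix(\alpha_O) \;=\; 2 \cdot \eig(\alpha) \cdot \fix(\alpha),
\end{eqnarray*}
which is the claimed inequality. I do not expect a serious obstacle here: the only point requiring a little care is that a semisimple automorphism genuinely permutes the \emph{minimal} ideals (so that the orbit decomposition into ideals of "cyclic type" is legitimate), and that semisimplicity of $\alpha$ is inherited by each restriction $\alpha_O$ — both of which are standard. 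The main content was already the cyclic case proved in the text; this proposition is just the bookkeeping that patches the orbits together.
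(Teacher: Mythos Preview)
Your proposal is correct and follows essentially the same route as the paper: decompose $\Ls$ into the $\alpha$-orbits of its simple ideals, apply the already-established cyclic case to each orbit, and sum using additivity of $\rank$ and $\fix$ together with $\eig(\alpha_O)\le\eig(\alpha)$. The paper's ideals $\Lm_j$ are exactly your $\Ls_O$, and the final chain of inequalities is identical.
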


\begin{proof} Clearly, $\Ls$ can be decomposed as $\Ls = \Lm_1 \oplus \cdots \oplus \Lm_t$ with each ideal $\Lm_j$ stable under $\alpha$ and such that $\alpha_j =: \pi_j \circ \alpha \circ \iota_j$ permutes the simple ideals of $\Lm_j$ cyclically. Here again, the $\pi_j$ and $\iota_j$ are the obvious projections and embeddings. Since $\rank(\Ls) = \sum_j \rank(\Lm_j)$ and $ \fix(\alpha) = \sum_j \fix(\alpha_j)$, we get
\begin{eqnarray*}
\rank(\Ls) &\leq& \sum_j 2 \cdot \eig(\alpha_j) \cdot \fix(\alpha_j) \\
&\leq& 2 \cdot \eig(\alpha) \cdot \fix(\alpha).
\end{eqnarray*}
\end{proof}

Note that the special case illustrates that, unlike in the simple case, a bound for $\operatorname{rank}(\Ls)$ must depend on both $\eig(\alpha)$ and $\fix(\alpha)$.

\subsection{Solvable Lie algebras}

\begin{proposition} Consider a solvable Lie algebra $\Lg$ with an automorphism $\alpha$ that has exactly $n$ distinct eigenvalues and an $m$-dimensional fixed-point space. Then the derived length of $\Lg$ is at most $(m + 1) \cdot K(n) + m$. \end{proposition}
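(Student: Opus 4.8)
The plan is to exploit the classical theorem of Shalev: a Lie algebra with a \emph{fixed-point-free} semisimple automorphism with $n$ distinct eigenvalues is solvable of derived length at most $K(n)$. Our automorphism $\alpha$ need not be fixed-point-free, but $\Fix(\alpha)$ is only $m$-dimensional, so the idea is to build a decreasing chain of $\alpha$-invariant ideals whose successive quotients either carry a fixed-point-free automorphism (hence have bounded derived length by Shalev) or are "small" (contained in a piece built from the fixed points, hence contribute at most $m$ steps of solvability in total). Summing the contributions of the quotients along the chain yields the claimed bound $(m+1)\cdot K(n) + m$.

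First I would pass to the eigenspace decomposition $\Lg = \bigoplus_{\lambda} \Lg_\lambda$ with respect to $\alpha$; this is a grading by the (multiplicatively written) group generated by the eigenvalues, and every term of the derived series, as well as every characteristic ideal I construct, is $\alpha$-invariant, hence homogeneous. The key object is the ideal generated by $\Fix(\alpha) = \Lg_1$, or rather a chain refining $0 \subseteq \Lg^{(\infty)} \subseteq \cdots \subseteq \Lg$; concretely I expect to look at the chain $\Lg \supseteq \Li_1 \supseteq \Li_2 \supseteq \cdots$ where at each stage one quotients out by the image of the fixed-point space and takes the ideal it generates. The crucial local statement is: if $\Lh$ is a solvable $\alpha$-invariant Lie algebra and $\alpha$ restricted to $\Lh$ has \emph{some} nonzero fixed point situation, then after one "reduction step" (modding out an $\alpha$-invariant ideal that is abelian-over-the-next-stage, or that is visibly spanned by few homogeneous fixed vectors) the induced automorphism on the relevant section is fixed-point-free on a subalgebra controlling a large chunk of the derived series, so Shalev applies and gives $K(n)$ steps; the leftover "fixed" directions can occur at most $m$ times because each strictly drops $\dim\Fix$ of the relevant section by at least one, and there are only $m$ such dimensions to spend.

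More precisely, I would argue that there is an $\alpha$-invariant chain $\Lg = \Lg_0 \supseteq \Lg_1 \supseteq \cdots \supseteq \Lg_s = 0$ with $s \leq m+1$ such that on each quotient $\Lg_i/\Lg_{i+1}$ the induced automorphism, after removing at most one dimension's worth of fixed points per step (contributing the additive "$+m$"), is fixed-point-free, whence $\dl(\Lg_i/\Lg_{i+1}) \leq K(n)$; then $\dl(\Lg) \leq \sum_{i} \dl(\Lg_i/\Lg_{i+1}) \leq (m+1)K(n)$, and tracking the at-most-$m$ extra abelian steps gives the stated $(m+1)K(n) + m$. The main obstacle — and the step I would spend the most care on — is making the "reduction step" precise: showing that one can always peel off an $\alpha$-invariant ideal so that the induced automorphism on the section genuinely becomes fixed-point-free (not merely has smaller fixed-point dimension) while strictly decreasing the total budget $m$, and simultaneously controlling that the ideal peeled off is abelian (so it costs only one derived-length step). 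I expect this to come down to a careful induction on $\fix(\alpha)$: if $\fix(\alpha)=0$ apply Shalev directly; otherwise find a homogeneous characteristic abelian $\alpha$-invariant ideal $\La$ (e.g. inside the last nonzero term of the derived series of the ideal generated by $\Fix(\alpha)$) meeting $\Fix(\alpha)$ nontrivially, so that $\alpha$ on $\Lg/\La$ has $\fix \leq m-1$ and $\dl(\Lg) \leq \dl(\Lg/\La) + 1$, and close the induction.
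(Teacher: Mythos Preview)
Your overall strategy---induction on $m=\fix(\alpha)$ with Shalev's theorem as the base case, and bounding $\dl(\Lg)$ by summing derived lengths along an $\alpha$-invariant subnormal chain---is exactly right, and it is the paper's strategy too. The gap is in your concrete reduction step.

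In the final paragraph you propose to find an abelian $\alpha$-invariant ideal $\La$ meeting $\Fix(\alpha)$ nontrivially, then pass to $\Lg/\La$ with $\fix$ dropped by one. Such an $\La$ need not exist. Take the two-dimensional non-abelian Lie algebra $\Lg=\C x\oplus\C y$ with $[x,y]=y$ and let $\alpha(x)=x$, $\alpha(y)=-y$. Then $\Fix(\alpha)=\C x$, the ideal generated by $\Fix(\alpha)$ is all of $\Lg$, and the only nonzero abelian ideal of $\Lg$ is $\C y$, which misses $\Fix(\alpha)$ entirely. The phenomenon is general: fixed points tend to sit at the \emph{top} of a solvable Lie algebra (as in the $\ad$-semisimple part), not inside the last term of the derived series, so ``peeling from the bottom'' cannot work.

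The paper's fix is to peel from the top instead. Let $i$ be maximal with $\Fix(\alpha)\subseteq\Lg^{(i)}$ (using the derived series, which is $\alpha$-stable). Semisimplicity of $\alpha$ gives that the induced automorphism on $\Lg/\Lg^{(i)}$ is fixed-point-free, so $\dl(\Lg/\Lg^{(i)})\leq K(n)$. By maximality of $i$ there is a fixed point $x\in\Lg^{(i)}\setminus[\Lg^{(i)},\Lg^{(i)}]$, so one can choose an $\alpha$-stable codimension-one ideal $\Lg'_i\lhd\Lg^{(i)}$ containing $[\Lg^{(i)},\Lg^{(i)}]$ and complementary to $\C x$; then $\fix(\alpha|_{\Lg'_i})=m-1$ and the induction closes:
\[
\dl(\Lg)\;\leq\;\dl(\Lg/\Lg^{(i)})+\dl(\Lg^{(i)}/\Lg'_i)+\dl(\Lg'_i)\;\leq\;K(n)+1+\bigl(m\,K(n)+(m-1)\bigr).
\]
Note that $\Lg'_i$ is only an ideal of $\Lg^{(i)}$, not of $\Lg$; that is enough, since the chain need only be subnormal. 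Your vaguer middle paragraph is compatible with this, but the explicit mechanism you committed to at the end would have to be replaced.
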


\begin{proof} Let us proceed by induction on $m = \fix(\alpha)$. Since Shalev's theorem deals with the case $m = 0$, we may assume that $m$ is positive. Let us consider any $\alpha$-stable subnormal series $0 = \Lg_{r+1} \subseteq \Lg_r \subseteq \Lg_{r-1} \subseteq \cdots \subseteq \Lg_1 = \Lg$ of $\Lg$ and a non-zero fixed-point $x$ of $\alpha$. Then there exists an $i$ such that $x \in \Lg_i \setminus \Lg_{i+1}$ and we may assume that $i$ is maximal under this condition. In particular: the induced automorphism on $\Lg / \Lg_i$ is fixed-point free. Let $\Lg'_i$ be an $\alpha$-stable complement to $\C \cdot x$ in $\Lg_i$, containing the commutator $[\Lg_i,\Lg_i]$. In particular: $\Lg_i = \C \cdot x \ltimes \Lg'_i$. Then the $\alpha$-stable subnormal series $$0 = \Lg_{r+1} \subseteq \Lg_r \subseteq \Lg_{r-1} \subseteq \cdots \subseteq \Lg_{i+1} \subseteq \Lg'_i \subseteq \Lg_i \subseteq \cdots \subseteq \Lg_1 = \Lg$$ has one more term than the previous one, and $\fix(\alpha|_{\Lg'_i}) = \fix(\alpha) - 1$. We may now apply the induction hypothesis to $\Lg'_i$ to obtain $\dl(\Lg'_i) \leq m \cdot K(n) + (m-1)$. We finish the proof by combining the above:
\begin{eqnarray*}
\dl(\Lg) &\leq& \dl(\Lg / \Lg_i) + \dl(\Lg_i / \Lg'_i) + \dl(\Lg'_i) \\
& \leq & K(n) + 1 + m \cdot K(n) + (m-1) \\
&=& (m + 1 ) \cdot K(n) + m.
\end{eqnarray*}
\end{proof}

\begin{proposition} Consider a solvable Lie algebra $\Lg$ with an automorphism $\alpha$ of order $n$ and an $m$-dimensional fixed-point space. Then the derived length of $\Lg$ is at most $(m + 1) \cdot k(n) + m$. \end{proposition}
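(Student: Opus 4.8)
The plan is to copy the proof of the preceding proposition essentially word for word, substituting Kreknin's theorem for Shalev's and the function $k$ for the function $K$. I would argue by induction on $m = \fix(\alpha)$. The base case $m = 0$ is exactly Kreknin's theorem: a fixed-point free automorphism $\alpha$ with $\alpha^n = \id_{\Lg}$ forces $\Lg$ to be solvable of derived length at most $k(\ord(\alpha))$, and $\ord(\alpha)$ divides $n$.

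For the inductive step $m > 0$, I would rerun the same localization. Fix an $\alpha$-stable subnormal series $0 = \Lg_{r+1} \subseteq \Lg_r \subseteq \cdots \subseteq \Lg_1 = \Lg$ and a nonzero $x \in \Fix(\alpha)$, and let $i$ be maximal with $x \in \Lg_i \setminus \Lg_{i+1}$, so that $\alpha$ induces a fixed-point free automorphism on $\Lg / \Lg_i$. As before, since $\alpha$ is semisimple and $\C \cdot x$ is an $\alpha$-stable line in the abelian quotient $\Lg_i / [\Lg_i, \Lg_i]$, there is an $\alpha$-stable complement $\Lg'_i$ to $\C \cdot x$ in $\Lg_i$ containing $[\Lg_i, \Lg_i]$; then $\Lg_i = \C \cdot x \ltimes \Lg'_i$, inserting $\Lg'_i$ refines the series, and $\fix(\alpha|_{\Lg'_i}) = m - 1$. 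Applying Kreknin's theorem to $\Lg / \Lg_i$, the induction hypothesis to $\Lg'_i$, and the bound $\dl(\Lg_i / \Lg'_i) \le 1$, and using the subadditivity of derived length along the refined series exactly as in the preceding proof, I would get
$$\dl(\Lg) \leq \dl(\Lg / \Lg_i) + \dl(\Lg_i / \Lg'_i) + \dl(\Lg'_i) \leq k(n) + 1 + m \cdot k(n) + (m - 1) = (m + 1) \cdot k(n) + m.$$

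The only step that is genuinely new — and the one I would flag as needing a word of justification — is the bookkeeping with orders. Passing to the $\alpha$-stable subspace $\Lg'_i$ or to the quotient $\Lg / \Lg_i$ may strictly decrease the order of the automorphism, so in both places one is really dealing with an automorphism of order $n'$ for some divisor $n'$ of $n$, and one needs $k(n') \leq k(n)$. This monotonicity is immediate once $k(n)$ is taken to be the minimal valid bound, as in the introduction: any Lie ring equipped with a fixed-point free automorphism of order dividing $n'$ is in particular one equipped with a fixed-point free automorphism whose $n$-th power is the identity, so the supremum of the derived length over the first class is at most that over the second. With this observation in hand, no further idea is required; everything else is identical to the proof of the preceding proposition.
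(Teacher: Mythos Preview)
Your proposal is correct and matches the paper's approach exactly: the paper's proof is a two-line remark that the preceding argument goes through verbatim with Kreknin's theorem in place of Shalev's and the substitution $K(n)\mapsto k(n)$, noting only that periodic automorphisms are semisimple. Your added remark on the monotonicity $k(n')\le k(n)$ for $n'\mid n$ is a fair technical point that the paper leaves implicit (the analogous monotonicity of $K$ in the number of eigenvalues is likewise tacit in the preceding proof).
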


\begin{proof} The above proof also works in this case. Since $\alpha$ is periodic, it is semisimple. The base case, $m=0$, corresponds with Kreknin's theorem. It then suffices to perform the substitution $K(n) \mapsto k(n)$. \end{proof}

\subsection{Reduction to solvable and semisimple Lie algebras}

The proof of theorem $2$ is now straightforward. Let $\Lg$ be a finite-dimensional Lie algebra over the complex numbers. Let $\Lr$ be the solvable radical of $\Lg$ and $\Ls$ a Levi-complement to $\Lr$ in $\Lg$ so that we may write $\Lg = \Ls \ltimes \Lr$. Let $\alpha$ be a semisimple automorphism of $\Lg$. Since $\Lr$ is known to be a characteristic ideal of $\Lg$, $\Lr$ is stable under $\alpha$ and the restriction $\map{\alpha|_{\Lr}}{\Lr}{\Lr}$ to $\Lr$ defines an automorphism of $\Lr$. We may also define an automorphism $\map{\overline{\alpha}}{\Ls}{\Ls}$ on the quotient $\Ls \cong \Lg / \Lr$ in the obvious way. (Alternatively: a result by Mostow yields a Levi-complement that is stable under $\alpha$ and we may denote the restriction of $\alpha$ to that complement by $\overline{\alpha}$.) Clearly $\eig(\alpha|_{\Lr}), \eig(\overline{\alpha}) \leq \eig(\alpha)$ and $\fix(\alpha|_{\Lr}),\fix(\overline{\alpha}) \leq \fix(\alpha)$. If $\alpha$ is periodic of order $n \in \N$, then also $\alpha|_{\Lr}$ and $\overline{\alpha}$ are periodic and their orders divide $n$. \newline

We may therefore restrict our attention to semisimple automorphisms of solvable and semisimple Lie algebras. Proposition $2$ then yields the upper bound for the derived length. Proposition $1$ gives the upper bound for the rank of the Levi complement. \newline

The classification of the finite-dimensional, simple, complex Lie algebras implies the following bound.

\begin{lemma} Consider the monotone function $\mapl{f}{\N}{\N}{k}{2 k^2 + 2k + 112}$. Then $\dim(\Ls_0) \leq f(\operatorname{rank}(\Ls_0))$ for each simple $\Ls_0$. \end{lemma}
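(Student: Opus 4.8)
The plan is to check the claimed inequality $\dim(\Ls_0) \leq 2\,k^2 + 2k + 112$, with $k = \rank(\Ls_0)$, directly against the classification of the finite-dimensional complex simple Lie algebras. There are only the four infinite families $A_k, B_k, C_k, D_k$ and the five exceptional algebras $G_2, F_4, E_6, E_7, E_8$, so the verification is a finite case analysis once one recalls the dimension formulas. For the classical families one has $\dim(A_k) = k(k+2) = k^2 + 2k$, $\dim(B_k) = \dim(C_k) = k(2k+1) = 2k^2 + k$, and $\dim(D_k) = k(2k-1) = 2k^2 - k$; in every case this is at most $2k^2 + 2k$, hence certainly at most $2k^2 + 2k + 112$. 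So the classical families are comfortably covered, and the additive constant $112$ is there precisely to absorb the five exceptional algebras.

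First I would record the dimensions and ranks of the exceptional algebras: $(\rank,\dim)$ equal to $(2,14)$, $(4,52)$, $(6,78)$, $(7,133)$, $(8,248)$ for $G_2, F_4, E_6, E_7, E_8$ respectively. For each, one checks $\dim \leq 2k^2 + 2k + 112$: for $E_8$ the right-hand side is $2\cdot 64 + 16 + 112 = 256 \geq 248$; for $E_7$ it is $98 + 14 + 112 = 224 \geq 133$; for $E_6$ it is $72 + 12 + 112 = 196 \geq 78$; for $F_4$ it is $32 + 8 + 112 = 152 \geq 52$; for $G_2$ it is $8 + 4 + 112 = 124 \geq 14$. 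The tightest constraint comes from $E_8$: we need the constant to be at least $248 - 2\cdot 8^2 - 2\cdot 8 = 248 - 144 = 104$, and $112$ suffices (the choice $112$ presumably being convenient elsewhere, or simply a round-ish safe value). Monotonicity of $f$ on $\N$ is clear since $f'(k) = 4k + 2 > 0$.

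I expect no real obstacle here: the only thing to be careful about is bookkeeping — making sure every simple type is listed (not forgetting the low-rank coincidences $A_1 = B_1 = C_1$, $B_2 = C_2$, $A_3 = D_3$, $D_2 = A_1 \oplus A_1$, which is not simple and so is excluded), and confirming that the stated constant genuinely dominates the exceptional cases rather than just the asymptotically-safe classical ones. Since $2k^2 + 2k$ already beats every classical dimension and $112 > 104 \geq \dim(E_8) - 2\rank(E_8)^2 - 2\rank(E_8)$ handles the finitely many exceptional algebras, the bound $\dim(\Ls_0) \leq f(\rank(\Ls_0))$ holds for all simple $\Ls_0$.
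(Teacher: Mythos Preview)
Your proposal is correct and is essentially the approach the paper intends: the paper does not spell out a proof of this lemma at all but simply declares that ``the classification of the finite-dimensional, simple, complex Lie algebras implies the following bound,'' and your case-by-case check over the classical families and the five exceptional algebras is exactly the routine verification this sentence is pointing to. Your observation that $E_8$ is the binding constraint (forcing the additive constant to be at least $104$) is a nice extra detail the paper omits.
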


\begin{proof} (of Corollary $1$) In order to obtain the upper bound for the derived length, we simply apply the upper bound for the function $K(n)$ as given by Shalev, for example: $K(n) \leq 2^{n-1}$. \newline

Now let $\Ls$ be semisimple and let $\Ls_0$ be a simple ideal of maximal dimension. Then the number of simple ideals is bounded from above by $\operatorname{rank}(\Ls)$ so that $\dim(\Ls) \leq \operatorname{rank}(\Ls) \cdot f(\operatorname{rank}(\Ls_0))$. Since $f$ is monotone, we obtain $\dim(\Ls) \leq \operatorname{rank}(\Ls) \cdot f(\operatorname{rank}(\Ls))$. \end{proof}

By making the obvious substitutions, we obtain the following.

\begin{theorem} Let $\Lg$ be a finite-dimensional Lie algebra over the complex numbers. Let $\Lr$ be its solvable radical and let $\alpha$ be an automorphism of $\Lg$ of order $n$ and with an $m$-dimensional fixed-point subalgebra. Then $$\dim(\Lg / \Lr) \leq 16 (m n)^3 + 4 (m n)^2 + 224 (m n) $$ and $
 \dl(\Lr) \leq (m + 1) \cdot k(n) + m \leq (m+1) \cdot 2^{n-1} + m.$
 \end{theorem}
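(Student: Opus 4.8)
The plan is to obtain Theorem $3$ from Theorem $2$ — more precisely, from Propositions $1$ and $3$ together with the dimension count carried out in the proof of Corollary $1$ — by the single observation that the hypothesis ``$\alpha$ has order $n$'' may be traded for the hypothesis ``$\alpha$ is semisimple with at most $n$ eigenvalues''. This trade is legitimate: a periodic automorphism is semisimple, and an automorphism of order $n$ has all of its eigenvalues among the $n$-th roots of unity, so $\eig(\alpha) \leq n$. I would begin by invoking the reduction of Subsection $2.3$ verbatim: write $\Lg = \Ls \ltimes \Lr$ with $\Lr$ the solvable radical (a characteristic, hence $\alpha$-stable, ideal) and $\Ls$ a Levi complement, so that $\alpha$ restricts to a periodic automorphism $\alpha|_{\Lr}$ of $\Lr$ and induces a periodic automorphism $\overline{\alpha}$ on $\Ls \cong \Lg/\Lr$; both have order dividing $n$, at most $n$ eigenvalues, and fixed-point spaces of dimension at most $m$.

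For the derived length, I would apply Proposition $3$ to the solvable Lie algebra $\Lr$ with the periodic automorphism $\alpha|_{\Lr}$. Since $\ord(\alpha|_{\Lr})$ divides $n$ and $\fix(\alpha|_{\Lr}) \leq m$, this yields $\dl(\Lr) \leq (m+1)\cdot k(n) + m$, taking $k$ non-decreasing, which is harmless. Substituting Kreknin's estimate $k(n) \leq K(n-1) \leq 2^{n-1}$ recalled in the introduction then gives $\dl(\Lr) \leq (m+1)\cdot 2^{n-1} + m$, as claimed.

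For the Levi complement, I would apply Proposition $1$ to the semisimple Lie algebra $\Ls$ with $\overline{\alpha}$: this gives $\rank(\Lg/\Lr) = \rank(\Ls) \leq 2\cdot \eig(\overline{\alpha})\cdot \fix(\overline{\alpha}) \leq 2mn$, which is exactly the rank bound of Theorem $2$ once $\eig(\overline{\alpha}) \leq n$ is inserted. Then I would repeat the counting argument from the proof of Corollary $1$: if $\Ls_0$ is a simple ideal of $\Ls$ of largest dimension, the number of simple ideals is at most $\rank(\Ls)$, so Lemma $2$ and the monotonicity of $f$ give
\[
 \dim(\Lg/\Lr) = \dim(\Ls) \leq \rank(\Ls)\cdot f\bigl(\rank(\Ls)\bigr) \leq 2mn\cdot f(2mn),
\]
and a routine expansion of $2mn\cdot f(2mn)$ yields the displayed cubic bound in $mn$.

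There is essentially no new content here, and the only point that I expect to require care is the ``obvious substitution'' itself: one must be explicit that periodicity supplies semisimplicity together with $\eig(\alpha)\leq \ord(\alpha)=n$ (and likewise for the restriction $\alpha|_{\Lr}$ and the induced automorphism $\overline{\alpha}$), and one must be comfortable using $k$, $K$ and $f$ monotonically. Everything after that — the Levi-decomposition bookkeeping and the arithmetic of substituting $2mn$ into the quadratic $f$ — is exactly the computation already performed for Theorem $2$ and Corollary $1$.
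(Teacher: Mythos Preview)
Your proposal is correct and follows exactly the route the paper takes: Theorem~3 is obtained ``by making the obvious substitutions'' into Theorem~2/Corollary~1, namely replacing $\eig(\alpha)$ by $\ord(\alpha)=n$ (since a periodic automorphism is semisimple with eigenvalues among the $n$-th roots of unity) and $K(n)$ by $k(n)$, then invoking Proposition~1, Proposition~3, Lemma~2 and the counting argument from the proof of Corollary~1. There is nothing to add.
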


\section{Remarks}

There is a classical theorem by Higman that refines Kreknin's theorem. It states that Lie algebras with a regular automorphism of prime order $p$ are nilpotent of $p$-bounded class, \cite{Higman} (cf. \cite{BorelSerre} and \cite{JacobsonRegular}). This was later generalized by Khukhro and Makarenko to: every Lie algebra with an automorphism of order $p \in \mathbb{P}$ and $m$-dimensional space of fixed-points contains an automorphically invariant ideal of $(p,m)$-bounded codimension and $p$-bounded class, \cite{KhukhroMakarenko}. One might therefore expect the following analogue of proposition $2$ and theorem $3$ to be true:

\begin{quote} \textbf{Conjecture.} \emph{The nilradical of a Lie algebra that admits an automorphism of prime order $p$ with $m$-dimensional space of fixed-points, is nilpotent of $(p,m)$-bounded class.} \end{quote}

The standard filiform Lie algebras form counter-examples to this conjecture since each such algebra admits an involutive automorphism with one-dimensional fixed-point space. Finally, we remark that Higman \cite{Higman}, Khukhro \cite{KhukhroBookComput}, Hughes \cite{Hughes}, Shumyatsky and Tamarozzi \cite{ShumyatskyTamarozzi} have confirmed the conjecture $k(n) \leq n-1$ for $n$ up to $7$, so that we obtain the following.

\begin{corollary} Let $\Lr$ be the solvable radical of a Lie algebra $\Lg$ with periodic automorphism $\alpha$ of order at most $7$. Then $$\rank(\Lg / \Lr) \leq 2 \cdot \ord(\alpha) \cdot \fix(\alpha)$$ and $$\dl(\Lr) \leq \ord(\alpha) \cdot \fix(\alpha) + \ord(\alpha) + \fix(\alpha) .$$ \end{corollary}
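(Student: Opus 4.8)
The plan is to obtain the corollary as a direct specialisation of Theorem 2 and Theorem 3, using only the numerical inputs $\eig(\alpha) \leq \ord(\alpha)$ and the cited verification that $k(j) \leq j-1$ for $j \leq 7$. Write $N := \ord(\alpha) \leq 7$ and $m := \fix(\alpha)$; since $\alpha$ is periodic it is semisimple, so both Theorem 2 and Theorem 3 apply to $\Lg$ with the automorphism $\alpha$.

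For the rank estimate I would first note that every eigenvalue of the order-$N$ automorphism $\alpha$ is an $N$-th root of unity, so $\eig(\alpha) \leq N = \ord(\alpha)$. Applying Theorem 2 with the $\eig(\alpha)$ eigenvalues of $\alpha$ then gives
$$\rank(\Lg/\Lr) \leq 2 \cdot \fix(\alpha) \cdot \eig(\alpha) \leq 2 \cdot \ord(\alpha) \cdot \fix(\alpha),$$
which is the first claim.

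For the derived-length estimate I would invoke Theorem 3 to get $\dl(\Lr) \leq (m+1)\cdot k(N) + m$, and then use $k(N) \leq N - 1$, valid because $N \leq 7$ by the results of Higman, Khukhro, Hughes and Shumyatsky--Tamarozzi recalled above. Expanding,
$$\dl(\Lr) \leq (m+1)(N-1) + m = mN + N - 1 \leq mN + N + m = \ord(\alpha)\cdot\fix(\alpha) + \ord(\alpha) + \fix(\alpha),$$
where the final inequality merely discards the harmless slack $-1 \leq m$. This is the asserted bound.

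Since everything reduces to a substitution into already-established results, there is no genuine obstacle here; the only two points that deserve a word of justification are the specialisations just used --- replacing $\eig(\alpha)$ by $\ord(\alpha)$, which is legitimate precisely because $\alpha$ is periodic, and replacing $k(N)$ by $N-1$, which is legitimate precisely because the hypothesis forces $N \leq 7$, the range in which the equality $k(j) = j-1$ has been confirmed.
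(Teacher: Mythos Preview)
Your proposal is correct and matches the paper's own approach: the corollary is obtained by substituting the bound $k(n)\leq n-1$ (valid for $n\leq 7$ by the cited results) into Theorem~3 for the derived-length estimate, and by combining Theorem~2 (equivalently Proposition~1) with the elementary inequality $\eig(\alpha)\leq\ord(\alpha)$ for the rank estimate. Your arithmetic $(m+1)(N-1)+m = mN+N-1 \leq mN+N+m$ is exactly the intended substitution.
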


\section*{Aknowledgements}

The author would like to thank his host Efim Zelmanov, N. Wallach, L. Small, and E. I. Khukhro for their insightful comments.

\end{document}